\documentclass{article}      
\usepackage{amssymb}
\usepackage{amsmath}
\usepackage{latexsym}
\usepackage[mathscr]{euscript}
\usepackage{graphicx}
\usepackage{amscd}
\usepackage{amsthm} 
\usepackage{fullpage} 
\usepackage{wrapfig}
\setlength{\oddsidemargin}{0in}
\setlength{\evensidemargin}{0in}
 \setlength{\textwidth}{6in}
\setlength{\textheight}{8.0in}

\newtheorem{theorem}{Theorem}
\newtheorem{corollary}[theorem]{Corollary}
\newtheorem{lemma}[theorem]{Lemma}  
\newtheorem{proposition}[theorem]{Proposition}

\newtheorem{definition}{Definition}

\newcommand{\bz}{\mathbb{Z}}

\newcommand{\bs}{\mathbb{S}}

\newcommand{\ce}{\mathcal{E}}

\newcommand{\cl}{\mathcal{L}}
\newcommand{\cs}{\mathcal{S}}
\newcommand{\cg}{\mathcal{G}}

\newcommand{\cp}{\mathcal{P}}

\newcommand{\hk}{\hookrightarrow}

\newcommand{\med}{\medskip}

\newcommand{\bfl}{\begin{flushleft}}
\newcommand{\efl}{\end{flushleft}}

\newcommand{\hocolim}{\operatorname{hocolim}}

\newcommand{\xr}{\xrightarrow}

\newcommand{\ltm}{LM^{-TM}}

 \newcommand{\haut}{hAut^R}
 \newcommand{\elx}{End^R_X \cl}
 \newcommand{\kg}{K(\Sigma^\infty (G_+))}

 \begin{document}

  \title{Homotopy automorphisms of $R$-module bundles, and the $K$-theory of string topology}  
  \author{Ralph L. Cohen \thanks{The first author was partially supported by a  grant  from the NSF.} \\ Department of Mathematics \\Stanford University \\ Bldg. 380 \\ Stanford, CA 94305, USA \and  John D.S Jones \\ Mathematics Institute \\Zeeman Building \\ Warwick University\\ Coventry, CV4 7AL, UK }
\date{\today}
\maketitle  
 \begin{abstract}   Let $R$ be a ring spectrum and $ \ce \to X$   an $R$-module bundle of rank $n$.  
 Our main result  is to identify the homotopy type of the group-like monoid of homotopy automorphisms of this bundle, $hAut^R(\ce)$.   This will generalize the result regarding $R$-line bundles  proven by the authors in \cite{cjgauge}.  The main application is the calculation of the homotopy type of $BGL_n(End ((\cl))$ where $\cl \to X$ is any $R$-line bundle, and $End (\cl)$ is the ring spectrum of endomorphisms.   In the case when such a bundle is the fiberwise suspension spectrum of a principal bundle over a manifold, $G \to P \to M$,   this leads to a description of the $K$-theory of the string topology spectrum in terms of the mapping space from $M$ to $BGL (\Sigma^\infty (G_+))$.   \end{abstract}

 \tableofcontents

 \section*{Introduction}       Let $R$ be a ring spectrum.  In several places in the recent literature, the notion of an $R$-module bundle $\ce \to X$ of rank $n$  has been defined and described \cite{units}, \cite{5author}, \cite{lind}. This is a parameterized $R$-module spectrum $\ce$ over $X$, where each fiber $E_x$ admits an $R$-module equivalence $E_x \xr{\simeq} \vee_{n} R$.   In analogy to vector bundles, it was proved in \cite{lind} that equivalence classes of rank $n$ $R$-module bundles over $X$ are in bijective correspondence with the set of homotopy classes, $[X, BGL_n(R)] = \pi_0(Map(X, BGL_n(R))$.  
 
 The main theorem in this paper is the identification of the homotopy type of the group-like monoid of homotopy automorphisms, 
 $\haut (\ce)$.  
 This is the space of self equivalences  of $\ce$ living over the identity of $X$ that preserve the $R$-module structure.   A precise definition will be given in the text of the paper.
 
 \begin{theorem}\label{main} Let $R$ be a ring spectrum and $X$ a connected space of the homotopy type of a $CW$-complex.  Let  $\ce \to X$ be an $R$-module bundle of rank $n$.  Then  there is an equivalence of   group-like monoids,
   $$
   hAut^R(\ce) \simeq \Omega Map_\ce (X, BGL_n(R)) 
   $$
   where the subscript in this mapping space refers to the path component  of maps that classify $\ce$.  
 \end{theorem}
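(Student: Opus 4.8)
The overall plan is to generalize, essentially verbatim, the argument used for $R$-line bundles in \cite{cjgauge}: present $\ce$ as the bundle associated to a principal $GL_n(R)$-fibration, identify $hAut^R(\ce)$ with the gauge monoid of that fibration, and then establish the classical ``gauge group $=$ loops on a classifying mapping space'' equivalence. \textbf{Step 1 (setup).} By the results of \cite{lind}, \cite{5author}, \cite{units} recalled in the introduction, an $R$-module bundle $\ce\to X$ of rank $n$ is a fiber bundle with fiber $\bigvee_n R$ and structure group-like topological monoid $GL_n(R)=Aut_R(\bigvee_n R)$, classified up to homotopy by a map $f\colon X\to BGL_n(R)$ depending only on the equivalence class of $\ce$. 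Choosing a convenient model (for instance taking $EGL_n(R)$ to be the two-sided bar construction $B(\ast,GL_n(R),GL_n(R))$), I let $P_f\to X$ be the pullback of the universal principal $GL_n(R)$-fibration along $f$, so that $\ce\simeq P_f\times_{GL_n(R)}(\bigvee_n R)$; the first task is to make all of this precise at the level of parametrized $R$-module spectra, so that the associated-bundle construction and its automorphisms are computed by these models.

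\textbf{Step 2 (from module automorphisms to gauge transformations).} I would then identify, as group-like monoids, $hAut^R(\ce)$ with the gauge monoid $\cg(P_f):=Map^{GL_n(R)}(P_f,GL_n(R))$, the $GL_n(R)$-equivariant maps with $GL_n(R)$ acting on itself by conjugation, equivalently the space of sections of the adjoint bundle $P_f\times_{GL_n(R)}GL_n(R)\to X$. The point is that an $R$-module self-equivalence of $\ce$ lying over $\mathrm{id}_X$ restricts on each fiber to a self-equivalence of $\bigvee_n R$, i.e. a point of $GL_n(R)$, and local triviality of $\ce$ assembles these fiberwise data into a section of the adjoint bundle; conversely such a section acts on $\ce=P_f\times_{GL_n(R)}(\bigvee_n R)$, and the two constructions are mutually inverse and compatible with composition. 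This step is where the rank-$n$ structure theory is used in an essential way.

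\textbf{Step 3 (the gauge theorem).} It remains to prove $B\cg(P_f)\simeq Map_f(X,BGL_n(R))$, which yields the theorem upon looping. I would argue by corepresentability: for a connected CW complex $Y$, a $\cg(P_f)$-bundle over $Y$ (with fiber $P_f$, on which $\cg(P_f)$ acts through $GL_n(R)$-bundle automorphisms) unwinds to a principal $GL_n(R)$-fibration over $Y\times X$ whose restriction to each slice $\{y\}\times X$ is equivalent to $P_f$; such a fibration is classified by a map $Y\times X\to BGL_n(R)$ carrying every slice into the $f$-component, i.e. by a map $Y\to Map_f(X,BGL_n(R))$. Running these identifications backwards as well, one sees that $B\cg(P_f)$ and $Map_f(X,BGL_n(R))$ corepresent the same functor on connected CW complexes, hence are equivalent; tracking basepoints and the pointwise-multiplication monoid structure (this is where one checks that looping the equivalence is a map of group-like monoids) gives the asserted equivalence $hAut^R(\ce)\simeq\Omega Map_\ce(X,BGL_n(R))$, since $f$ classifies $\ce$. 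Alternatively one may run the hands-on Atiyah--Bott/Gottlieb fibration-sequence argument, or the simplicial one identifying $Map(X,BGL_n(R))$ with the classifying space of the topological category of $GL_n(R)$-bundles on $X$; in the case $n=1$ this is exactly the argument of \cite{cjgauge}.

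\textbf{Main obstacle.} I do not expect the homotopy theory of Step 3 to be the difficulty: that argument is classical and robust, and the one feature special to the $R$-line case — commutativity of $GL_1(R)$ — is never used, so it passes unchanged to the (generally noncommutative) group-like monoid $GL_n(R)$. The delicate point is Step 2 together with the model choices of Step 1: one must pin down a point-set, parametrized-spectrum framework in which ``$R$-module homotopy automorphism of $\ce$'' and ``section of the adjoint bundle of $P_f$'' are literally equivalent group-like monoids, and in which the bar-construction models of $EGL_n(R)$ and of the associated bundle are compatible enough to feed into the $B(-)$-versus-$Map(X,-)$ comparison. Getting these models to cooperate — rather than any new homotopy-theoretic input — is the real work.
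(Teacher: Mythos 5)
Your proposal follows essentially the same route as the paper: the paper likewise identifies $hAut^R(\ce)$ (defined as $GL_1$ of the section ring spectrum $End^R(\ce)$) with the section space of the adjoint bundle $P_\ce^{Ad}$ of the associated principal $GL_n(R)$-bundle — i.e.\ with the gauge group $\cg(P_\ce)$ — by comparing two fibrations over $X$ with fiber $GL_n(R)$, and then invokes the Atiyah--Bott equivalence $B\cg(P)\simeq Map_P(X,BG)$, which is exactly the alternative you name at the end of your Step 3. Your corepresentability phrasing of Step 3 and your flagged concern about point-set models in Steps 1--2 are consistent with, and no weaker than, what the paper actually does.
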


 \med
 The special case of this theorem when $n=1$ was proved by the authors in  \cite{cjgauge}.  As discussed there, this result is important in string topology.  Namely, given a principal bundle over a manifold $G \to P \to M$,  if we let 
$$\cl = \Sigma^\infty_M (P_+)$$
be the fiberwise suspension spectrum of $P$ with a fiberwise disjoint basepoint, then the string topology spectrum of $P$, $\cs (P) =  P^{-TM}$ is equivalent, as ring spectra, to the endomorphism ring $End^{\Sigma^\infty (G_+)}(\Sigma^\infty_M (P_+))$.  Thus if $R = \Sigma^\infty (G_+)$, and $\ce = \Sigma^\infty_M (P_+)$, then the above theorem, in the case  $n= 1$, describes the homotopy type of the group-like monoid of units, $GL_1(\cs (P))$.   
 
\med
  Theorem \ref{main} in its general setting will  have  following implication to string topology.

  \med
   Let $\cl \to X$ be an $R$-line bundle, and let $\oplus_n \cl \to X$ be the Whitney-sum of $n$-copies of $\cl$.  This is an $R$-module bundle of rank $n$.  
   
\begin{corollary}\label{bgln}  There is a homotopy equivalence
$$
BGL_n(\elx) \simeq Map_{\oplus_n\cl}(X, BGL_nR).
$$
\end{corollary}

As a special case
 we obtain the following result about the general linear groups of the string topology spectrum.

\med
\begin{corollary}\label{string} If $G \to P \to M$ is a principal bundle over a manifold and $\cl = \Sigma^\infty_M (P_+)$, there is a homotopy equivalence
$$
BGL_n (\cs (P)) \simeq Map_{\oplus_n\cl}(M, BGL_n(\Sigma^\infty(G_+)).
$$
   In particular there is an equivalence,
$$
BGL_n(LM^{-TM}) \simeq Map_{\iota_n}(M, BGL_n(\Sigma^\infty(\Omega M_+)).
$$
Here $\iota_n$ classifies $\oplus_n \cl$, where $\cl = \Sigma^\infty_M (\cp_+)$, and $\cp \to M$ is a universal bundle in the sense that $\cp$ is contractible. 
\end{corollary}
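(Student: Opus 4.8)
The plan is to derive Corollary~\ref{string} from Corollary~\ref{bgln} by identifying, in the two geometric situations, the $R$-line bundle $\cl$ and the ambient ring spectrum $R$ so that the endomorphism ring $\elx = End^R_X(\cl)$ becomes the string topology spectrum. First I would recall from the discussion following Theorem~\ref{main} that for a principal bundle $G \to P \to M$, setting $R = \Sigma^\infty(G_+)$ and $\cl = \Sigma^\infty_M(P_+)$, one has an equivalence of ring spectra $\cs(P) = P^{-TM} \simeq End^{\Sigma^\infty(G_+)}(\Sigma^\infty_M(P_+)) = \elx$. Granting this, Corollary~\ref{bgln} applied with $X = M$ immediately yields
\[
BGL_n(\cs(P)) \;\simeq\; BGL_n(\elx) \;\simeq\; Map_{\oplus_n \cl}(M, BGL_n(\Sigma^\infty(G_+))),
\]
which is the first displayed equivalence, once one checks that the classifying map of $\oplus_n \cl$ is the one denoted $\oplus_n \cl$ in the statement (a bookkeeping point).

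Next I would specialize to the loop-space principal bundle. Taking $\cp \to M$ to be a universal principal $\Omega M$-bundle — concretely the path fibration $PM \to M$ with fiber $\Omega M$, so that $\cp \simeq PM$ is contractible — we have $G = \Omega M$, $R = \Sigma^\infty(\Omega M_+)$, and $\cl = \Sigma^\infty_M(\cp_+) = \Sigma^\infty_M(PM_+)$. The key input here is the known identification, coming from the string topology literature, of the Thom spectrum $LM^{-TM}$ with the string topology spectrum $\cs(\cp) = \cp^{-TM}$ of this bundle; equivalently, $LM \simeq PM \times_{\Omega M} \Omega M$ exhibits $LM^{-TM}$ as the total string topology spectrum of the path fibration. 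Substituting into the first equivalence gives
\[
BGL_n(LM^{-TM}) \;\simeq\; Map_{\iota_n}(M, BGL_n(\Sigma^\infty(\Omega M_+))),
\]
with $\iota_n$ the classifying map of $\oplus_n \cl$ for $\cl = \Sigma^\infty_M(\cp_+)$, as asserted.

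The genuinely nontrivial step — the one I expect to be the main obstacle — is the ring-spectrum-level identification $\cs(P) \simeq \elx$, i.e.\ that the Thom spectrum $P^{-TM}$ with its string-topology (Chas--Sullivan / Gruher--Salvatore) multiplicative structure agrees with the endomorphism ring spectrum of the fiberwise suspension spectrum $\Sigma^\infty_M(P_+)$ viewed as a module over $\Sigma^\infty(G_+)$. This requires a careful comparison of the umkehr/intersection-product construction defining the string topology product with the composition product on $End^R_X(\cl)$, including matching the Thom-isomorphism twist by $-TM$ on one side with the fiberwise duality $\Sigma^\infty_M(P_+)^\vee$ on the other. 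In the $n=1$ case this is exactly the content cited from \cite{cjgauge}; for general $n$ nothing new is needed at this point because Corollary~\ref{bgln} already handles the passage from $\cl$ to $\oplus_n \cl$, so the remaining work is purely to invoke the $n=1$ ring identification and then quote Corollary~\ref{bgln}. The rest — verifying contractibility of $\cp$, unwinding that $\cp \times_{\Omega M} G \simeq LM$, and tracking which path component the classifying map lands in — is routine and I would treat it as such.
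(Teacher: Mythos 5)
Your proposal is correct and follows essentially the same route as the paper: the paper derives Corollary \ref{string} directly from Corollary \ref{bgln} by invoking the ring-spectrum equivalence $\cs(P) = P^{-TM} \simeq End^{\Sigma^\infty(G_+)}(\Sigma^\infty_M(P_+))$ established in \cite{cjgauge}, and then specializes to the path-loop fibration $\Omega M \to \cp \to M$ to obtain the statement for $LM^{-TM}$. You correctly identify that this equivalence of ring spectra is the only nontrivial input beyond Corollary \ref{bgln}, and that it is imported from the cited earlier work rather than reproved here.
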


 Our next main result  describes  how these results have $K$-theoretic consequences for these ring spectra.  
 First note that 
  the string topology spectrum $\cs (P)$ is a \sl nonconnective \rm ring spectrum.  Indeed its homology groups are nontrivial through dimension $-n$, where $n$ is the dimension of the manifold $M$.   If $\cs$ is a ring spectrum,  let $K_{conn} (S)$ denote the   algebraic $K$-theory spectrum of its connective cover    $K(\cs_0)$.  This $K$-theory spectrum has zero-space $\Omega^\infty K_{conn} (\cs ) =  K_0(\pi_0(\cs)) \times BGL(\cs)^+$, where the superscript $+$ denotes a group completion that will be described in the text of the paper.     We will show that Corollary \ref{string} implies the following result about $K$-theory.

\begin{theorem}\label{ktheory}   Given an $R$-line bundle $\cl \to M$,  there is a homology isomorphism
$$
\alpha :  Map_{\cl} (M, BGL(\Sigma^\infty (G_+))) \to \Omega_0^\infty K_{conn}(\cs (P)). $$
 The subscript $0$ denotes  the path component of the basepoint in 
$\Omega^\infty K_{conn}(\cs (P))$.
  $Map_{\cl} (M, BGL(\Sigma^\infty (G_+))$ is the homotopy colimit of the mapping spaces $Map_{\oplus_n \cl} (M, BGL_n(\Sigma^\infty (G_+))$. \end{theorem}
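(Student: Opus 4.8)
The plan is to combine Corollary~\ref{string}, passed to the colimit over $n$, with Quillen's plus construction, and then read off the homology statement. The first step is to verify that the equivalences $BGL_n(\cs(P)) \simeq Map_{\oplus_n \cl}(M, BGL_n(\Sigma^\infty(G_+)))$ of Corollary~\ref{string} are natural in $n$ with respect to the evident stabilization maps: on the left, the inclusion $GL_n(\cs(P)) \hookrightarrow GL_{n+1}(\cs(P))$ acting as the identity on a newly added free $\cs(P)$-summand; on the right, the map induced by the Whitney-sum inclusion $\oplus_n \cl \hookrightarrow \oplus_{n+1}\cl$, which classifies a stabilization map $BGL_n(\Sigma^\infty(G_+)) \to BGL_{n+1}(\Sigma^\infty(G_+))$ and hence a map of mapping spaces carrying the component of $\oplus_n \cl$ to that of $\oplus_{n+1}\cl$. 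Unwinding the construction in the proof of Corollary~\ref{bgln}, i.e.\ Theorem~\ref{main} applied to $\oplus_n\cl$ and to $\oplus_{n+1}\cl$, one checks that these squares commute up to coherent homotopy, so passing to homotopy colimits yields a homotopy equivalence
$$
\hocolim_n BGL_n(\cs(P)) \;\simeq\; \hocolim_n Map_{\oplus_n\cl}(M, BGL_n(\Sigma^\infty(G_+))) \;=\; Map_\cl(M, BGL(\Sigma^\infty(G_+))),
$$
the last equality being the definition made in the statement. Moreover, since $M_n$ and finite wedges commute with passage to connective covers, $GL_n(\cs(P)) = GL_n(\cs(P)_0)$, so the left-hand side above is $BGL(\cs(P)) = BGL(\cs(P)_0)$; this is consistent with the formula $\Omega^\infty K_{conn}(\cs(P)) = K_0(\pi_0(\cs(P))) \times BGL(\cs(P))^+$ recalled in the introduction.

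The second step is routine. Since $BGL(\cs(P))$ is connected, the basepoint component $\Omega^\infty_0 K_{conn}(\cs(P))$ is $\{0\} \times BGL(\cs(P))^+$, which I identify with $BGL(\cs(P))^+$. I then define $\alpha$ to be the composite
$$
Map_\cl(M, BGL(\Sigma^\infty(G_+))) \;\xrightarrow{\ \simeq\ }\; BGL(\cs(P)) \;\xrightarrow{\ +\ }\; BGL(\cs(P))^+ \;=\; \Omega^\infty_0 K_{conn}(\cs(P)),
$$
where the first arrow is the equivalence just produced and the second is the canonical acyclic map of the plus construction. An acyclic map is a homology isomorphism, and the first arrow is a homotopy equivalence, so $\alpha$ is a homology isomorphism, which is the assertion of the theorem. (One could instead bypass the plus construction entirely by applying the group-completion theorem of McDuff--Segal to the block-sum monoid $\coprod_n BGL_n(\cs(P))$, whose group completion is homotopy equivalent to $\bz \times \Omega^\infty_0 K_{conn}(\cs(P))$ and whose homology is the localization of $\colim_n H_*(BGL_n(\cs(P))) = H_*(BGL(\cs(P)))$.)

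The hard part is the naturality claim in the first step: one must make sure the equivalences furnished by Theorem~\ref{main}/Corollary~\ref{bgln} are genuinely compatible with \emph{both} families of stabilization maps, the algebraic one on the $GL_n(\cs(P))$ side and the geometric one coming from $\oplus_n\cl \hookrightarrow \oplus_{n+1}\cl$, so that they assemble into an equivalence of homotopy colimits. Once that compatibility is in place, the remaining ingredients, namely invariance of $GL_n$ under passage to connective covers, the $BGL^+$-description of $\Omega^\infty K_{conn}$, and the fact that an acyclic map is a homology isomorphism, are all standard.
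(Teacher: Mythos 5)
Your proposal is correct and follows essentially the same route as the paper: stabilize the rank-$n$ equivalences of Corollary \ref{string} over $n$, pass to the homotopy colimit, and observe that the group completion (whether via Quillen's plus construction or, as the paper does, via McDuff--Segal applied to $\coprod_n BGL_n(End^R(\cl))$) induces a homology isomorphism onto $\Omega_0^\infty K_{conn}$. The one step you flag but do not actually carry out --- the compatibility of the levelwise equivalences with the two families of stabilization maps --- is precisely what the paper supplies by modelling $BGL_n(\cs(P))$ as $B\cg(\oplus_n P_\cl)$, where the algebraic stabilization becomes ``direct sum with the identity automorphism'' of principal bundles and the Atiyah--Bott equivalence visibly carries it to the block-sum map $q_n$ on mapping spaces.
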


\med
This theorem can be viewed as a statement about the  group completion of the above mapping spaces.   However we point out that this  is \sl not \rm the same as the mapping space to the group completion, which would be the zero space of the mapping spectrum $Map_0 (M, K(\Sigma^\infty (G_+))$.  This spectrum    calculates the $\kg$-cohomology of $M$.   
   However, as we will show below,  we can define  a homomorphism of $K$-theory groups,

\begin{equation}\label{kgcoho}
\gamma: K_{conn}^{-q}(\cs (P)) \to \kg^{-q}(M)
\end{equation}
which gives a partial geometric  understanding of the $\kg$-cohomology theory in terms of the algebraic $K$-theory of the string topology spectrum.  The situation when $q=0$ was studied in detail by Lind in \cite{lind}.  

We conclude by observing     two important applications of Theorem \ref{ktheory}.  

\med
\begin{corollary}\label{application} Let $M$ be a closed manifold. There are homology equivalences
\begin{align}
  Map_{\Sigma^\infty_M (\cp_+)} (M, BGL(\Sigma^\infty (\Omega M_+))  &\to \Omega^\infty_0 K_{conn}(LM^{-TM})   \notag  \\
  Map_{\bs} (M, BGL(\bs))   &\to    \Omega_0^\infty K_{conn}(DM)  \notag
\end{align}
where $\bs$ is the sphere spectrum, and $\ltm$ is the Thom spectrum of the virtual bundle $-TM$ over $M$, pulled back over $LM$ via the map $e : LM \to M$ that evaluates a loop at the basepoint of the circle.  $DM$ denotes the Spanier-Whitehead dual of the manifold $M$, which is an $E_\infty$-ring spectrum.
\end{corollary}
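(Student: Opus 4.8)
The plan is to derive both equivalences by specializing Theorem \ref{ktheory} to the two geometric situations discussed after Corollary \ref{string}, and then identifying the relevant string topology spectrum in each case. For the first equivalence, I would take $G = \Omega M$ and let $\cp \to M$ be the path-loop fibration, so that $\cp \simeq *$ is contractible and $\cp \to M$ is a universal principal $\Omega M$-bundle. As already recorded in the discussion of Corollary \ref{string}, with $R = \Sigma^\infty(\Omega M_+)$ and $\cl = \Sigma^\infty_M(\cp_+)$ we have an equivalence of ring spectra $\cs(\cp) \simeq LM^{-TM}$; indeed this is the classical Cohen--Jones identification of the string topology spectrum of the free loop space as the endomorphism spectrum $End^{\Sigma^\infty(\Omega M_+)}(\Sigma^\infty_M(\cp_+))$. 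Feeding $P = \cp$ into the homology isomorphism $\alpha$ of Theorem \ref{ktheory} then yields the homology equivalence
$$
Map_{\Sigma^\infty_M(\cp_+)}(M, BGL(\Sigma^\infty(\Omega M_+))) \to \Omega^\infty_0 K_{conn}(LM^{-TM}),
$$
which is exactly the first line.

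For the second equivalence I would take $G$ to be the trivial group, so that $P = M$ and $\Sigma^\infty(G_+) = \bs$, the sphere spectrum. Then $\cl = \Sigma^\infty_M(M_+)$ is the fiberwise suspension of the trivial bundle, i.e. the $\bs$-line bundle underlying $M$, and the string topology spectrum $\cs(P) = M^{-TM}$ is the Thom spectrum of $-TM$ over $M$ itself, which by Atiyah duality is the Spanier--Whitehead dual $DM$. The $E_\infty$ ring structure on $DM$ is the one coming from the diagonal of $M$; under Atiyah duality this corresponds to the ring spectrum structure on $M^{-TM}$ that string topology produces. Since $R = \bs$ here, the classifying mapping space appearing in Theorem \ref{ktheory} is $Map_{\bs}(M, BGL(\bs))$, where the subscript $\bs$ records the path component classifying the trivial $\bs$-line bundle. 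Applying $\alpha$ in this case gives the second line.

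The only genuine content beyond quoting Theorem \ref{ktheory} is checking that the two ring-spectrum identifications — $\cs(\cp) \simeq LM^{-TM}$ and $\cs(M) \simeq DM$ — are compatible with the hypotheses of that theorem, namely that in each case $\cl$ really is an $R$-line bundle with $R = \Sigma^\infty(G_+)$ and that $\cs(P) = End^R(\cl)$ as ring spectra (not merely as spectra). For the loop space case this is the main theorem of Cohen--Jones and is already invoked in the text; for the trivial-group case it reduces to the statement that Atiyah duality $M^{-TM} \simeq DM$ is multiplicative, which is standard. I expect the main obstacle, such as it is, to be purely bookkeeping: making sure the basepoint component $\iota$ (resp. the component labelled $\bs$) in the source mapping space is the one Theorem \ref{ktheory} actually uses, i.e. that it classifies $\cl$ itself and not some twist; this follows because in both cases $\cl$ is, up to equivalence, the canonical (trivializable-on-fibers) bundle, so its classifying map is the evident one. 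No further analysis is needed; the corollary is a direct application.
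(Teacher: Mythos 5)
Your proposal is correct and follows essentially the same route as the paper: both equivalences are obtained by specializing Theorem \ref{ktheory} to the path-loop fibration $\Omega M \to \cp \to M$ (giving $\cs(\cp) \simeq LM^{-TM}$) and to the trivial principal bundle with $G = \{id\}$ (giving $\cs(M) \simeq DM$ via Atiyah duality). The paper's proof is exactly this specialization, deferring the detailed analysis of the two examples to \cite{cjgauge}, just as you defer the multiplicative identifications to Cohen--Jones and to the multiplicativity of Atiyah duality.
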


\med
We point out that in these cases, the map $\gamma$ defined above (\ref{kgcoho})  gives homomorphisms
$$
\gamma : K_{conn}^{-q}(LM^{-TM})  \to A(M)^{-q}(M)    \quad \text{and} \quad \gamma : K_{conn}^{-q}(D(M)) \to A(point)^{-q}(M).
$$

The algebraic $K$-theory of nonconnective spectra was defined in terms of Waldhausen categories of modules by Blumberg and Mandell in \cite{blumbergmandell}.  When $X$ is simply connected,  they related the Waldhausen category defining  $K(D(X))$ to the Waldhausen category defining $A(X) = K(\Sigma^\infty (\Omega X_+))$.  It would be interesting to relate Corollary \ref{application} regarding the   $K_{conn}$-theory to their  results.   

\med
This paper is organized as follows.  In Section 1 we will prove Theorem \ref{main} and derive Corollaries \ref{bgln} and \ref{string}.   In Section 2 we  describe the $K$-theoretic implications of Thoerem \ref{main},  and in particular  we prove Theorem \ref{ktheory}.

   \section{Automorphisms of $R$-module bundles}
   
   Let $R$ be a ring spectrum.   
   Let $\ce \to X$ be an $R$-module bundle of rank $n$,  in the sense of Lind \cite{lind}.  This is a parameterized spectrum over $X$ in the sense of May and Sigurdsson \cite{maysigurd}, where for each $x \in X$, the fiber $\ce_x$ is an $R$-module spectrum of rank $n$.  We denote the category of such bundles by $R-mod_n (X)$.  Again, this category was defined in \cite{lind}. It was shown there that equivalence classes of such bundles are classified by   homotopy classes of maps $X \to BGL_n(R)$.  Fix a particular map $\gamma_\ce : X \to BGL_n(R)$ classifying $\ce$.  This choice defines a basepoint in the mapping space $\gamma_\ce \in Map_\ce(X, BGL_n(R))$.  The endomorphisms of $\ce$ in $R-mod_n(X)$ is a parameterized spectrum which we denote by $End_M^R(\ce) \to X$.  For every $x \in X$ this defines a fiber spectrum $End_M^R(\ce)_x$ which is equivalent to the ring of endomorphisms $End^R(\vee_n R)$.  $End_M^R(\ce) $ is a parameterized ring spectrum under composition. By taking a fibrant replacement if necessary, we can take sections to produce an ordinary spectrum 
   $$
   End^R(\ce) = \Gamma_M(End^R_M (\ce)).
   $$
   The parameterized ring structure on $End_M^R(\ce)$ defines a ring spectrum structure on $End^R(\ce)$.
   
   \med
   
   \begin{definition}\label{haut}  We define the group-like monoid $\haut(\ce)$ to be the   units of the ring spectrum of endomorphisms,
   $$
   \haut (\ce) = GL_1(End^R(\ce)).
   $$
   \end{definition}

   \med
   
   We are now ready to prove Theorem \ref{main}.
   
   \begin{proof}

Consider the fiber bundle of infinite loop spaces 
given by taking the zero spaces of the fibrant model of $End^R_M(\ce)$:
$$
\Omega^\infty (End^R(\vee_n R)) \to    \Omega^\infty_M End^R_M(\ce) \to M.
$$
  By restricting to path components of those $R$-module endomorphisms that consist of equivalences, we get a subbundle, which we will call $ \cg L_n(\ce)$:
$$
GL_n(R) \to  \cg L_n(\ce)  \to M.
$$
 Notice that  $\haut(\ce)$ can be described as the space of sections of this bundle, $ \haut (\ce) = \Gamma_M(\cg L_n(\ce))$. We now observe that the fiber homotopy type of the bundle $\cg L_n (\ce) \to M$ has another description.  The homotopy class of map $\gamma_\ce : M \to BGL_n(R)$ that classifies the $R$-module bundle $\ce$,  also classifies a principal $GL_n(R)$-bundle over $M$:  
 $$
 GL_n(R) \to P_\ce \to M.
 $$
 Here we are replacing the group-like topological monoid $GL_n(R)$ by a topological group, which by abuse of notation, we continue to refer to as $GL_n(R)$.  As was shown in \cite{lind}, the relationship between $P_\ce$ and the parameterized spectrum $\ce$ is that there is an equivalence,  $$\Omega^\infty_M \ce  \simeq  P_\ce \times_{GL_n(R)} \Omega^\infty (\vee_n R),$$ where we are continuing the abuse of notation to allow  $\Omega^\infty (\vee_n R)$ to refer to a weakly homotopy equivalent  infinite loop space  that carries an action of the group $GL_nR$.   (See  \cite{maysigurd} and \cite{lind} for details.)    
 
 Now consider the corresponding adjoint bundle, $GL_n(R) \to P_\ce^{Ad} \to X$.  In this notation $P_\ce^{Ad} $ is the homotopy orbit space $P_\ce^{Ad} = P_\ce \times_{GL_n(R)} GL_n(R)$ where $GL_n(R)$ is acting on itself by the adjoint action (conjugation).  A standard observation about adjoint bundles implies that $P_\ce^{Ad} $   is the bundle whose fiber over $x \in M$ is the space  of $GL_n(R)$-equivariant automorphisms of  the fiber of $P_\ce$ at $x$.   The section space $\Gamma_M(P_\ce^{Ad})$ is therefore the group of equivariant automorphisms of $P_\ce$ living over the identity on $M$.  This is known as the gauge group, $\cg (P_\ce)$.
 
  From this viewpoint it becomes clear that there is a map of fibrations   $ P_\ce^{Ad} \to \cg L_n(\ce)$ over $M$ (after taking appropriate fibrant and cofibrant replacements), which is an equivalence on the fibers.     Therefore there is an equivalence of  their spaces of sections, as group-like $A_\infty$-spaces.
\begin{equation} \label{autgl}
 \phi:  \cg (P_\ce) = \Gamma_M( P_\ce^{Ad})  \simeq   \Gamma_M(\cg L_1(\ce))  =  hAut^R_X(\ce).
\end{equation}
 Now a well known     theorem of Atiyah and Bott \cite{atiyahbott}  says that the classifying space of the gauge group $\cg (P)$  of a principal bundle $G \to P \to M$ is  equivalent to the mapping space,
 $B\cg (P) \simeq Map_P(X, BG)$.  Theorem \ref{main} now follows by applying this 
  Atiyah-Bott equivalence   to the principal bundle $GL_n(R) \to P_\ce \to X.$
 \end{proof}

   \med
   We now consider an application of Theorem \ref{main} to an important special case. 
  Let $\cl \to X$ be an $R$-line bundle, and let $\oplus_n \cl \to X$ be the Whitney-sum of $n$-copies of $\cl$.  This is an $R$-module bundle of rank $n$, which is defined to be the pullback under the diagonal map $\Delta^n : X \to X^n$ of the exterior $n$-fold product $\cl^n \to X^n$.   Consider the endomorphism spectrum $End^R(\cl)$. As noted above, this is a ring spectrum.   We first need the following observation:

\med
\begin{lemma}\label{GLN}  There is an equivalence of group-like monoids
$$
\lambda : \haut (\oplus_n \cl) \simeq  GL_n(End^R(\cl)).  
$$
\end{lemma}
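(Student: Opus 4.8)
The plan is to identify both sides as spaces of sections of equivalent parameterized ring spectra (or rather, the units thereof). Recall that by definition $\haut(\oplus_n\cl) = GL_1(End^R(\oplus_n\cl))$, so it suffices to produce an equivalence of ring spectra $End^R(\oplus_n\cl) \simeq GL_n(End^R(\cl))$ — more precisely, since $GL_n$ is not literally a ring, the correct statement is an equivalence of $A_\infty$-ring spectra $End^R(\oplus_n\cl)\simeq M_n(End^R(\cl))$, the spectrum of $n\times n$ matrices over $End^R(\cl)$, compatibly with the multiplicative structure; then taking units gives the claim. The key point is a fiberwise computation: for each $x\in X$ the fiber of $End^R_M(\oplus_n\cl)$ is $End^R(\vee_n \cl_x)$, and since $\cl_x$ is an $R$-line (rank one), there is a standard equivalence $End^R(\vee_n \cl_x)\simeq M_n(End^R(\cl_x))$ coming from decomposing a map $\vee_n\cl_x \to \vee_n\cl_x$ into its $n^2$ components $\cl_x\to\cl_x$. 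So fiberwise the two parameterized spectra agree.

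The steps I would carry out, in order. First, recall the construction of $\oplus_n\cl$ as $(\Delta^n)^*(\cl^{\times n})$, so that $End^R_M(\oplus_n\cl)$ is the pullback along $\Delta^n\colon X\to X^n$ of the exterior endomorphism spectrum over $X^n$; over $X^n$ this exterior object has fiber over $(x_1,\dots,x_n)$ equal to $\bigoplus_{i,j}\mathrm{Hom}^R(\cl_{x_i},\cl_{x_j})$. Second, pulling back along the diagonal, the fiber of $End^R_M(\oplus_n\cl)$ over $x$ becomes $\bigoplus_{i,j}\mathrm{Hom}^R(\cl_x,\cl_x) = M_n(End^R_M(\cl)_x)$, and this identification is visibly natural in $x$, so it assembles to an equivalence of parameterized ring spectra $End^R_M(\oplus_n\cl)\simeq M_n(End^R_M(\cl))$ over $X$, where the right side is the fiberwise-matrix parameterized ring spectrum built from the parameterized ring spectrum $End^R_M(\cl)$. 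Third, take fibrant replacements and pass to sections: $\Gamma_M$ commutes with forming $n\times n$ matrices in the sense that $\Gamma_M(M_n(\mathcal A)) \simeq M_n(\Gamma_M(\mathcal A))$ for a parameterized ring spectrum $\mathcal A$ (sections of a finite product is the product of sections, and matrix multiplication is defined entrywise by finite sums and composition, all of which sections respect), giving $End^R(\oplus_n\cl)\simeq M_n(End^R(\cl))$ as $A_\infty$-ring spectra. Fourth, apply $GL_1(-)$: by definition of $GL_n$ of a ring spectrum $\mathcal S$ as the group-like submonoid of $\Omega^\infty M_n(\mathcal S)$ on the invertible path components, we get $GL_1(M_n(End^R(\cl))) = GL_n(End^R(\cl))$, and functoriality of $GL_1$ under $A_\infty$-ring maps turns the equivalence of step three into an equivalence of group-like monoids $\lambda\colon \haut(\oplus_n\cl)\simeq GL_n(End^R(\cl))$.

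The main obstacle is making the fiberwise matrix decomposition genuinely parameterized and multiplicative, rather than just a fiberwise statement. Concretely: one must be careful that the $n$ inclusions and projections $\cl\rightrightarrows\oplus_n\cl$ exist as maps of parameterized $R$-module spectra over $X$ (they do, since $\oplus_n\cl$ is a fiberwise Whitney sum), and that the resulting decomposition of $End^R_M(\oplus_n\cl)$ as a fiberwise product $\prod_{i,j}\underline{\mathrm{Hom}}^R_M(\cl,\cl)$ is compatible with composition in the sense that composition corresponds to matrix multiplication — this is the usual "Morita" bookkeeping, but it has to be checked at the level of parameterized spectra, which requires choosing a point-set model (say, the one in May–Sigurdsson used by Lind) in which these inclusions/projections and the pairings are strict enough to make the matrix-multiplication diagrams commute up to coherent homotopy. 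Once that is set up, the passage through fibrant replacement, sections, and $GL_1$ is formal. An alternative route that sidesteps some of this is to go through Theorem~\ref{main} twice — identify $\haut(\oplus_n\cl)\simeq\Omega Map_{\oplus_n\cl}(X,BGL_n(R))$ and compare with $GL_n(End^R(\cl))$ via its own classifying space — but the direct Morita argument above seems cleaner and is the one I would write up.
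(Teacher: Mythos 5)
Your proposal is correct and follows essentially the same route as the paper: the paper also identifies $End^R(\oplus_n\cl)$ with the $n\times n$ matrix ring over $End^R(\cl)$ (written there as $End^S(\vee_n S)\simeq \prod_n(\vee_n S)$ with $S=End^R(\cl)$, using $\vee_n End^R(\cl)\simeq Mor^R(\cl,\oplus_n\cl)$), checks the identification is multiplicative, and then takes $GL_1$ of both sides. Your version is somewhat more explicit about the fiberwise/parameterized bookkeeping and the passage through sections, but the underlying Morita-type argument is the same.
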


\begin{proof}
Given a ring spectum $S$,  recall that $GL_n(S)$ is defined to be the group-like monoid of units in the endomorphism ring,
$$
GL_n(S) =  GL_1(End^S(\vee_n S)).
$$
 Notice that there is a natural equivalence $End^S(\vee_n S) = \prod_n (\vee_n S)$.   Also notice that   $\vee_n End^R(\cl) \simeq Mor^R(\cl, \oplus_n \cl)$, where
 $Mor^R$ refers to the $R$-module morphisms in the category of parameterized spectra over $X$.  Thus when $S = End^R(\cl)$, we have a natural equivalence
 $$
 End^S(\vee_n S) \simeq \prod_n(Mor^R(\cl, \oplus_n \cl) = End^R (\oplus_n \cl).
 $$
 Furthermore this equivalence clearly preserves the ring structure.  It therefore  induces an equivalence  of their group-like monoids of units,
 $$
 GL_1(End^S(\vee_n S)) \simeq GL_1(End^R(\oplus_n \cl).
 $$
 The left side is by definition $GL_n (End^R(\cl))$, and the right side is by definition $\haut (\oplus_n \cl)$.
\end{proof}

We now notice that  Theorem \ref{main} and Lemma \ref{GLN} together imply the following.  

\begin{corollary}\label{bgln}  There is a homotopy equivalence
$$
BGL_n(\elx) \simeq Map_{\oplus_n\cl}(X, BGL_nR).
$$
\end{corollary}

When $G \to P \to M$ is a principal bundle over a manifold and $\cl = \Sigma^\infty_M (P_+)$, we obtain the following result about the general linear groups of the string topology spectrum,

\med
\begin{corollary}\label{string} There is a homotopy equivalence
$$
\beta : BGL_n (\cs (P)) \xr{\simeq} Map_{\oplus_n \cl}(M, BGL_n(\Sigma^\infty(G_+))
$$
where     $\cl$ is the line bundle $\Sigma^\infty (G_+) \to \Sigma^\infty_M(P_+) \to M$.    In particular there is an equivalence 
$$
BGL_n(LM^{-TM}) \simeq Map_{\iota_n}(M, BGL_n(\Sigma^\infty(\Omega M_+)).
$$
\end{corollary}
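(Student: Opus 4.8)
The plan is to deduce the equivalence from Corollary~\ref{bgln} together with the identification of the string topology spectrum as an endomorphism ring recalled in the Introduction; in this light the corollary carries essentially no new content beyond what has already been set up. The first step is to set $X = M$, $R = \Sigma^\infty(G_+)$, and $\cl = \fsm(P_+) = \Sigma^\infty_M(P_+)$ --- the $R$-line bundle whose fibers are copies of $\Sigma^\infty(G_+)$ and which is classified by the principal bundle $G\to P\to M$ in the sense of \cite{lind}. Corollary~\ref{bgln} then produces a homotopy equivalence $BGL_n(\elx)\simeq Map_{\oplus_n\cl}(M, BGL_n(\Sigma^\infty(G_+)))$, where (with $X=M$) $\elx = End^R(\cl) = End^{\Sigma^\infty(G_+)}(\Sigma^\infty_M(P_+))$. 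Since $BGL_n(-)$ is invariant under equivalences of ring spectra, it then suffices to invoke the fact stated in the Introduction that this endomorphism ring is equivalent, as a ring spectrum, to $\cs(P) = P^{-TM}$; composing with the previous equivalence gives the asserted map $\beta$.

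For that ring equivalence the mechanism is fiberwise Atiyah duality: the parameterized endomorphism ring $End^R_M(\cl)$ has, over each $x\in M$, the homotopy type of $End^{\Sigma^\infty(G_+)}(\Sigma^\infty(G_+))\simeq\Sigma^\infty(G_+)$, and passing to global sections --- after the fibrant/cofibrant replacements used to define $End^R(\cl)=\Gamma_M(End^R_M(\cl))$ --- assembles these fibers into the Thom spectrum $P^{-TM}$, with the parameterized composition product going over to the Chas--Sullivan string product. The only remaining bookkeeping is to record that $\oplus_n\cl$, being by definition the $\Delta^n$-pullback of the exterior product $\cl^n$, is classified by a well-defined map into $BGL_n(\Sigma^\infty(G_+))$, which is the path component named in the subscript of the mapping space.

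For the displayed special case I would specialize to a universal bundle: take $G = \Omega M$ (Moore loops) and let $\cp\to M$ be a principal $\Omega M$-bundle with contractible total space, a model for $E\Omega M\to B\Omega M\simeq M$, so that $\cl = \Sigma^\infty_M(\cp_+)$. The general equivalence $\beta$ then reads $BGL_n(\cs(\cp))\simeq Map_{\iota_n}(M, BGL_n(\Sigma^\infty(\Omega M_+)))$, where $\iota_n$ classifies $\oplus_n\cl$; and since $\cs(\cp)=\cp^{-TM}$ is, in the classical string topology picture, the Cohen--Jones spectrum $\ltm$ (the space of twisted loops in the universal bundle being homotopy equivalent to $LM$), this becomes $BGL_n(\ltm)\simeq Map_{\iota_n}(M, BGL_n(\Sigma^\infty(\Omega M_+)))$.

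I expect the only genuine obstacle to be the multiplicative --- as opposed to merely additive --- comparison $End^{\Sigma^\infty(G_+)}(\Sigma^\infty_M(P_+))\simeq\cs(P)$: one must work carefully in the category of parameterized spectra of May--Sigurdsson~\cite{maysigurd}, check that the section spectrum genuinely computes the derived endomorphisms, and then verify that fiberwise Atiyah duality carries the composition product to the string product, and not merely the underlying spectra to one another. Everything else --- substituting into Corollary~\ref{bgln}, passing to the universal bundle, and tracking the classifying maps $\oplus_n\cl\mapsto\iota_n$ --- is routine.
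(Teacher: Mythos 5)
Your proposal matches the paper's route exactly: the corollary is obtained by specializing Corollary \ref{bgln} to $X=M$, $R=\Sigma^\infty(G_+)$, $\cl=\Sigma^\infty_M(P_+)$ and invoking the ring-spectrum equivalence $End^{\Sigma^\infty(G_+)}(\Sigma^\infty_M(P_+))\simeq \cs(P)$, which the paper treats as known from \cite{cjgauge} rather than reproving; the universal-bundle specialization for $LM^{-TM}$ is handled the same way. The extra care you flag about the multiplicative comparison via fiberwise Atiyah duality is legitimate but is precisely the content delegated to \cite{cjgauge}, so no new argument is needed here.
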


\section{$K$-theoretic implications}
  
  The goal of this section is to use  Corollary \ref{string} to prove Theorem \ref{ktheory} as stated in the introduction.  We will then use it to derive descriptions
  of the $K$-theory of the connective covers of the Spanier-Whitehead dual $D(M) = Map (\Sigma^\infty (M_+), \bs)$ and of the string topology ring spectrum $\ltm$. 
  
  To do this we need to understand the equivalence given in Corollary \ref{bgln} more carefully, so that we can deduce $K$-theoretic consequences.  As in that corollary, let $\cl \to M$ be an $R$-line bundle and let $P_\cl \to M$ denote the principal $GL_1R$-bundle defined by the classifying map of $\cl$,  $\gamma_\cl : M \to BGL_1(R)$.  Let $\oplus_n P_\cl$ denote the  principal bundle classified by $$M \xr{\Delta} \prod_n M  \xr{\gamma_\cl^n } \prod_n BGL_1(R) \xr{\mu} BGL_n(R).$$ Here $\mu$ is the usual block addition.  This is the principal $GL_n(R)$-bundle associated to the $n$-fold Whitney sum $\oplus_n \cl$. 
  
  Combining  equivalence (\ref{autgl}) with Lemma \ref{GLN}  defines an equivalence of  group-like $A_\infty$-spaces,
  $$
  \begin{CD}
 \Psi_n :  \cg (\oplus_n  P_\cl)   @>\phi > \simeq >   hAut^R(\oplus_n \cl) @>\lambda >\simeq >  GL_n(End^R(\cl)).
  \end{CD}
  $$
   
   \med
   Notice that these equivalences respect the  standard inclusions of wreath products, that makes the following diagrams commute:
   
   \med
   
\begin{equation}\label{wreath}
   \begin{CD}
   \Sigma_k \int \cg(\oplus_n P_\cl)      @>\hk >>   \cg ((\oplus_{nk} P_\cl) \\
   @V 1 \times \Psi_n^k VV     @VV\Psi_{nk} V \\
   \Sigma \int (GL_n(End^R(\cl)))  @>>\hk >  GL_{nk}(End^R(\cl)).
   \end{CD}
   \end{equation}

   \med
   Using the Barratt-Eccles $E_\infty$-operad,  we can conclude the following:
   
   \med
   \begin{proposition}
   The disjoint unions $\coprod_{n \geq 0} B\cg(\oplus_n P_\cl)$ and $\coprod_{n\geq 0} BGL_n(End^R(\cl))$
   have $E_\infty$-algebra structures, and the equivalence
   $$
   \sqcup  B\Psi_n  :  \coprod_{n \geq 0} B\cg(\oplus_n P_\cl)  \to  \coprod_{n\geq 0} BGL_n(End^R(\cl))
   $$
   is an equivalence of $E_\infty$-spaces.  
    \end{proposition}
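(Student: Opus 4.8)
The plan is to realize both disjoint unions as classifying spaces of permutative topological categories, to invoke the classical fact that such a classifying space carries a natural action of the Barratt--Eccles operad $\msc E = \{E\Sigma_k\}_{k \ge 0}$, and then to observe that the equivalences $B\Psi_n$ assemble into the map induced by a strict symmetric monoidal functor; being a levelwise equivalence it is then an equivalence of $\msc E$-algebras.

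Write $S = End^R(\cl)$. First I would package the groups $GL_n(S)$ into a permutative topological category $\mathcal F_S$: its object monoid is $\bn$, its morphism space $\mathrm{Mor}(n,n)$ is $GL_n(S)$ (empty between distinct objects), composition is monoid multiplication, the strictly associative and unital monoidal product $\oplus$ comes from $\vee_n S \oplus \vee_m S = \vee_{n+m} S$, and the symmetry isomorphism $\vee_n S \oplus \vee_m S \to \vee_m S \oplus \vee_n S$ is the block-swap automorphism. One checks the permutative coherence axioms, and $B\mathcal F_S = \coprod_{n \ge 0} BGL_n(S)$; the $\Sigma_k$-actions on $\vee_{nk}S$ generated by the block swaps are exactly the wreath inclusions $\Sigma_k \int GL_n(S) \hookrightarrow GL_{nk}(S)$. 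In the same way, using $\oplus_n P_\cl \oplus \oplus_m P_\cl = \oplus_{n+m} P_\cl$ and the block permutations of $\oplus_{nk} P_\cl$, I would build a permutative topological category $\mathcal A_\cl$ with $\mathrm{Mor}(n,n) = \cg(\oplus_n P_\cl)$ and $B\mathcal A_\cl = \coprod_{n \ge 0} B\cg(\oplus_n P_\cl)$.

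Next I would invoke the machine: it is classical (Segal, May) that the classifying space of a permutative topological category is naturally an $\msc E$-algebra, the structure map $E\Sigma_k \times (B\mathcal C)^k \to B\mathcal C$ being assembled from the $k$-fold monoidal product together with the symmetry data, with the free contractible $\Sigma_k$-space $E\Sigma_k$ used to resolve the permutations; on the summand of $(B\mathcal C)^k$ indexed by a single object $n$ this structure map is induced by the wreath inclusion. Moreover a strict symmetric monoidal functor induces a map of $\msc E$-algebras on classifying spaces. Applied to $\mathcal F_S$ and $\mathcal A_\cl$ this yields the $E_\infty$-algebra structures asserted in the Proposition.

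Finally I would show that $\{\Psi_n\}$ underlies a symmetric monoidal functor $\Psi : \mathcal A_\cl \to \mathcal F_S$, the identity on objects. The equivalences $\phi$ of (\ref{autgl}) and $\lambda$ of Lemma \ref{GLN} are each induced by natural maps of the underlying parametrized spectra (respectively, principal bundles), so $\Psi_n = \lambda \circ \phi$ is compatible with the products $\oplus$; its compatibility with the symmetry and block-permutation data is precisely the commuting square (\ref{wreath}). Then $\sqcup B\Psi_n = B\Psi$ is a map of $\msc E$-algebras, and since each $B\Psi_n$ is a homotopy equivalence it is an equivalence of $E_\infty$-spaces. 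The one genuinely delicate point — and the step I expect to be the main obstacle — is that each $\Psi_n$ is only an equivalence of group-like $A_\infty$-spaces, not a strict homomorphism, so $\Psi$ is a priori not a functor of strict categories. This is handled by a standard rectification: replace all the monoids together with their monoidal and symmetry structures by strictly equivalent ones (via Moore loop or bar-construction models, or May's rectification of operad actions), chosen compatibly so that after rectification $\mathcal F_S$ and $\mathcal A_\cl$ are genuine permutative topological categories and $\Psi$ a strict symmetric monoidal functor; equivalently, one may run Segal's $\Gamma$-space machine, which is functorial for symmetric monoidal functors up to coherent homotopy, so that the data of (\ref{autgl}), Lemma \ref{GLN} and (\ref{wreath}) suffices directly.
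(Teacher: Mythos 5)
Your proposal is correct and follows essentially the same route the paper intends: the paper's (unwritten) proof consists precisely of the wreath-product compatibility diagram (\ref{wreath}) together with the invocation of the Barratt--Eccles operad, which is exactly the content you spell out via permutative categories and rectification. Your explicit handling of the rectification of the $A_\infty$-equivalences $\Psi_n$ is a detail the paper elides but which is needed and standard.
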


    \med
   We are now ready to deduce $K$-theoretic information from the above analysis.

   \med
  If $X$ is an $E_\infty$ space,  let $X^+$ denote the corresponding infinite loop space given by its group completion.
   In other words, 
   $$
   X^+ = \Omega B(X).
   $$

   \med
   
   Given a ring spectrum $S$, let $S_0$ denote its   connective  cover.  We denote by $K_{conn}(S)$ the algebraic $K$-theory spectrum  $K(S_0)$.  A connected component of the  zero space of this spectrum is given by  $\Omega_0^\infty K_{conn}(S) = \left(\coprod_n BGL_n(S)\right)_0^+$, the component of the basepoint in the group completion.    We note that the usual algebraic $K$-theory spectrum $K(S)$ is defined as the $K$-theory of the Waldhausen-category of perfect modules over $S$  \cite{blumbergmandell}.  When $S$ is not connective, these spectra are not necessarily equivalent, where they are for connective ring spectra.

   \med
   We can now conclude the following.  
   
   \med
   \begin{proposition}\label{firstK}  If $R$ is a ring spectrum and $R \to \cl \to M$ is an $R$-line bundle over a closed manifold $M$,  then each path component of the zero space of the $K$-theory spectrum of the endomorphism ring has the following homotopy type:
  $$
  \Omega_0^\infty (K_{conn}(End^R(\cl))) \simeq   \left(\coprod_{n\geq 0} BGL_n(End^R(\cl))\right)_0^+ \simeq  \left(\coprod_{n \geq 0} B\cg(\oplus_n P_\cl)\right)_0^+.
  $$
  \end{proposition}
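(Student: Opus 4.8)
The plan is to reduce the statement to two facts that are already available: the description of the zero space of $K_{conn}$ of a ring spectrum recalled just above, and the equivalence of $E_\infty$-spaces $\sqcup B\Psi_n$ produced by the preceding Proposition. Granting these, both displayed equivalences are formal.

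For the first equivalence, write $S = End^R(\cl)$ and let $S_0$ be its connective cover. Since $\Omega^\infty S = \Omega^\infty S_0$ as spaces, and $GL_n(S) = GL_1(End^S(\vee_n S))$ is obtained by restricting the zero space of the matrix ring to the path components of units, we have $GL_n(S) \simeq GL_n(S_0)$ as group-like monoids, compatibly in $n$ with block sum; hence $\coprod_{n\ge 0} BGL_n(S) \simeq \coprod_{n\ge 0} BGL_n(S_0)$ as $E_\infty$-spaces. As $S_0$ is connective, the group-completion theorem (equivalently, Quillen's plus construction applied to $BGL(S_0)$; see also \cite{blumbergmandell}) identifies the basepoint component of the group completion of this $E_\infty$-space with $BGL(S_0)^+ = \Omega_0^\infty K(S_0) = \Omega_0^\infty K_{conn}(S)$ --- which is exactly the description recalled before the statement. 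Thus $\Omega_0^\infty K_{conn}(End^R(\cl)) \simeq \left(\coprod_{n\ge 0} BGL_n(End^R(\cl))\right)_0^+$, the first asserted equivalence. (Passing to the basepoint component is what makes the identification clean: the full group completion carries only a $\bz$ of components, whereas $\Omega^\infty K(S_0)$ carries a $K_0(\pi_0 S_0)$ of them, but the two agree on the component of the basepoint.)

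For the second equivalence I would apply the group-completion functor $X \mapsto X^+ = \Omega B(X)$ to the equivalence of $E_\infty$-spaces
\[
\sqcup B\Psi_n : \coprod_{n\ge 0} B\cg(\oplus_n P_\cl) \;\xr{\simeq}\; \coprod_{n\ge 0} BGL_n(End^R(\cl))
\]
furnished by the preceding Proposition. Since $\Omega B(-)$ carries weak equivalences of $E_\infty$-spaces to equivalences of infinite loop spaces, this yields $\left(\coprod_n B\cg(\oplus_n P_\cl)\right)^+ \simeq \left(\coprod_n BGL_n(End^R(\cl))\right)^+$, and restricting to basepoint components completes the proof.

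The only point that requires care --- and the closest thing to an obstacle --- is keeping track of the $E_\infty$-structures: one must use that $\sqcup B\Psi_n$ is an equivalence of $E_\infty$-algebras, not merely a levelwise equivalence of spaces, so that it induces an equivalence after group completion. But this is precisely the content of the preceding Proposition, which was assembled from the compatibility with wreath products expressed in diagram (\ref{wreath}). Likewise one uses that the block-sum $E_\infty$-structure is the one whose group completion computes $\Omega^\infty K_{conn}$, which is the standard fact already recalled. With these two inputs in hand, no further argument is needed.
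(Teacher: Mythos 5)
Your proposal is correct and matches the paper's (implicit) argument: the paper offers no separate proof, treating the proposition as an immediate consequence of the recalled identification $\Omega_0^\infty K_{conn}(S) \simeq \left(\coprod_n BGL_n(S)\right)_0^+$ together with the $E_\infty$-equivalence $\sqcup B\Psi_n$ from the preceding proposition, which is exactly the reduction you carry out. Your added remark that $GL_n(S)\simeq GL_n(S_0)$ for the connective cover is a point the paper leaves implicit, but it is consistent with the paper's definitions.
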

  
  \med
  We now proceed with the proof of Theorem \ref{ktheory} as stated in the introduction.
  
  \med
  \begin{proof} As is usual, let $BGL (End (\cl))$ denote the homotopy colimit of the inclusion maps $BGL_n (End (\cl)) \hk BGL_{n+1} (End (\cl))$  induced by the usual inclusions of the groups $GL_n (End (\cl)) \hk GL_{n+1}(End(\cl))$.  Then by the group-completion theorem of McDuff-Segal \cite{mcduffsegal},  there is a homology equivalence,
   $$
   \alpha : \bz \times BGL (End(\cl)) \to \left(\coprod_{n\geq 0} BGL_n(End^R(\cl))\right)^+  \simeq \bz \times \Omega_0^\infty  (K_{conn}(End^R(\cl))).
   $$
   Furthermore, by Proposition \ref{firstK}  there is an equivalence
   $$
   \bz \times \hocolim_n B\cg(\oplus_n P_\cl) \simeq \bz \times BGL (End(\cl)).
   $$
 The maps in the colimit are induced by the inclusions of gauge groups, $j_n:  \cg(\oplus_n P_\cl) \hk \cg(\oplus_{n+1}P_\cl)$ given by thinking of an automorphism of $\oplus_n P$ as an automorphism of $\oplus_{n+1} P$ by taking the direct sum with the identity map on the last factor.
 
 \med
  Now for any principal bundle  $G \to P \to M$, consider the Atiyah-Bott equivalence \cite{atiyahbott} mentioned in the proof of Theorem \ref{main}:
  $$
  \beta : B\cg (P) \simeq  Map_P(M, BG).
  $$
  This equivalence is given by the observation that the equivariant mapping space $Map^G(P, EG)$ is a contractible space with a free action of the gauge
  group $\cg(P) = Aut^G_M(P)$ given by precomposition.  The orbit space of this action is then  $Map_P (M, BG)$.  The examples relevant here
  are the principal bundles $GL_n(R) \to \oplus_n P_\cl \to M$.  In this case  we have an   equivalence 
  $$
 \beta_n:  B\cg (\oplus_n P_\cl) \xr{\simeq}   Map_{\oplus_n\cl}(M, BGL_nR).
 $$
  With respect to these equivalences, the inclusions $j_n : B\cg (\oplus_n P_\cl) \to B\cg (\oplus_{n+1} P_\cl)$  are given up to homotopy by maps 
  $$
  q_n : Map_{\oplus_n\cl}(M, BGL_nR) \to Map_{\oplus_{n+1}\cl}(M, BGL_{n+1}R) 
  $$
  defined  by sending $\phi : M \to BGL_n(R)$ to the composition $$q_n(\phi) : M \xr{\Delta} M \times M  \xr{\phi \times \gamma_\cl }  BGL_n(R) \times BGL_1(R) \xr{\mu} BGL_{n+1}(R)$$  where $\gamma_\cl$ is the fixed basepoint in $Map_\cl (M,  BGL_1(R))$, and $\mu$ is the usual block pairing.

 If we denote the homotopy colimit of these maps $hocolim_n Map_{\oplus_n\cl}(M, BGL_nR)$ by   $Map_\cl (M, BGL(R))$,   we then have a homology equivalence
 $$
 \bz \times Map_\cl (M, BGL(R)) \xr{\simeq} \bz \times \hocolim_n B\cg(\oplus_n P_\cl) \simeq \bz \times BGL (End(\cl)) \to  \bz \times \Omega_0^\infty K_{conn}(End (\cl)),
 $$ which is the statement of Theorem \ref{ktheory}. 
   \end{proof}
   
\med
We now proceed to prove Corollary \ref{application},  describing  $K_{conn}$ of both the string topology spectrum, and of the Spanier-Whitehead dual of a manifold.     In the case of the string topology spectrum $\cs (M) = \ltm$,  this is the string topology of the universal principal bundle.  That is, a principal bundle over $M$ fiber homotopy  equivalent to the path-loop fibration  $\Omega M \to \cp_M \to M$.      In the case of the Spanier-Whitehead dual, $DM$,  one sees that if one considers the trivial principal bundle, when $G = \{id\}$,   then the string topology spectrum of this bundle
is simply the dual, $DM = Map(M, \bs)$. Both of these examples are analyzed in more detail in \cite{cjgauge}.     Corollary \ref{application}   follows by applying   Theorem \ref{ktheory} to these special cases.

 \end{document}